\newtheorem{theorem}{Theorem}
\newtheorem{lemma}[theorem]{Lemma}
\newenvironment{proof}[1][Proof]{\noindent\textbf{#1.} }{\ \rule{0.5em}{0.5em}}
\begin{document}

\title{A general formula for determinants and inverses of $r$-circulant
matrices with third order recurrences}
\author{{\small Emrullah KIRKLAR\thanks{%
Corresponding author} \thanks{%
E-mails: e.kirklar@gazi.edu.tr, fatihyilmaz@gazi.edu.tr},} {\small Fatih
YILMAZ\ } \\
{\small Polatl\i\ Art and Science Faculty, Gazi University, Turkey}}
\maketitle

\begin{abstract}
This note provides formula for determinant and inverse of $r$-circulant
matrices with general sequences of third order. In other words, the study
combines many papers in the literature.

\textbf{AMS2010: }15A09; 15A15.
\end{abstract}

\section{Introduction}

A $r$-circulant matrix of order $n$, $C_{n}:=circ_{r}\,(c_{0},c_{1},\ldots
,c_{n-1})$, associated with the numbers $c_{0},c_{1},\ldots ,c_{n-1}$, is
defined as 
\begin{equation}
C_{n}=\left( 
\begin{array}{ccccc}
c_{0} & c_{1} & \ldots & c_{n-2} & c_{n-1} \\ 
rc_{n-1} & c_{0} & \ldots & c_{n-3} & c_{n-2} \\ 
\vdots & \vdots & \ddots & \vdots & \vdots \\ 
rc_{2} & rc_{3} & \ldots & c_{0} & c_{1} \\ 
rc_{1} & rc_{2} & \ldots & rc_{n-1} & c_{0}%
\end{array}%
\right) .  \label{2}
\end{equation}%
where each row is a cyclic shift of the row above it \cite{1}. If $r=1$,
then the matrix $C_{n}$ is ordinary circulant matrix. If $r=-1$, then the
matrix $C_{n}$ is skew-circulant matrix.

Circulant matrices and their applications are a fundamental key in many
areas of pure and applied science (see \cite{10,15}, and references there
in). Recently, many researcher get very interesting properties of them. For
example, in \cite{1}, Shen and Cen obtained upper and lower bounds for the
spectral norms of $r$-circulant matrices involving Fibonacci and Lucas
numbers. Further, they gave some bounds for the spectral norms of Kronecker
and Hadamard products of these matrices. In \cite{2}, Shen et al. obtained
useful formulas for determinants and inverses of circulant matrices with
Fibonacci and Lucas numbers, using properties of circulant matrices and this
sequences. In \cite{3}, Bozkurt and Tam gave formulas for determinants and
inverses of circulant matrices involving Jacobsthal and Jacobsthal-Lucas
numbers taking into account the method in \cite{2}. Bozkurt and Tam \cite{4}
defined $r$-circulant matrices with general second order number sequences.
Then, the authors obtained formulas for determinant and inverse of this
matrix. Moreover, they gave some bounds for norms of $r$-circulant matrices
involving Fibonacci and Lucas numbers. Yazl\i k and Taskara \cite{5}
considered circulant matrices with $k$-Horadam numbers. Then, the authors
obtained formulas for determinant and inverse of this matrix. Liu and Jiang 
\cite{13} defined Tribonacci circulant matrix, Tribonacci left circulant
matrix, Tribonacci $g$-circulant matrix. Then, the authors acquired
determinants and inverses of these matrices. In \cite{14}, Bozkurt et al.
considered the determinant of circulant and skew-circulant matrices whose
entries are Tribonacci numbers. Bozkurt and Y\i lmaz \cite{16} obtained
formulas for determinant and inverse of circulant matrices with Pell and
Pell-Lucas numbers.

In this paper, we consider third order linear recurrence for $n>2$: 
\begin{equation}
W_{n}=pW_{n-1}+qW_{n-2}+tW_{n-3}\,  \label{1}
\end{equation}%
$~$with initial conditions $W_{0}=0,W_{1}=a~$and $W_{2}=b$. The first few
values are 
\begin{equation*}
0,a,b,pb+qa,p^{2}b+pqa+qb+ta,\ldots \,.
\end{equation*}%
Then, we obtain formulas for determinants and inverses of $r$-circulant
matrices $E_{n},$ i.e.,%
\begin{equation*}
E_{n}:=circ_{r}\,(W_{1},W_{2},\ldots ,W_{n}),
\end{equation*}%
where $W_{n}$ is given by \eqref{1}.

As it can be seen from the definition of the sequence, it is a general form
of some well-known sequences. In other words,

$\diamondsuit $ If $p=q=a=b=r=1$ and $t=0$, then we obtain determinant and
inverse of circulant matrices with Fibonacci numbers, as in \cite{2}.

$\diamondsuit $ If $p=a=b=r=1,t=0$ and $q=2$, then we obtain determinant and
inverse of circulant matrices with Jacobsthal numbers, as in \cite{3}.

$\diamondsuit $\ If $q=a=r=1$, $p=b=2$ and $t=0$, then we obtain determinant
and inverse of circulant matrices with Pell numbers, as in \cite{16}.

$\diamondsuit $ If $p=q=a=b=t=r=1$, then we obtain determinant and inverse
of circulant matrices with Tribonacci numbers, as in \cite{13}.

$\diamondsuit $ If $p=q=a=b=t=1$ and $r=-1$, then we obtain determinant and
inverse of skew-circulant matrices with Tribonacci numbers, as in \cite{14}.

To sum up, the derived formulas combine many of the papers in the literature.

\section{Determinant of $E_{n}$}

This section is dedicated for determinant formula of $r$-circulant matrices
with general third order sequences. Firstly, let us give the following
lemmas.

\begin{lemma}
\cite{14} If 
\begin{equation}
D_{n}=\left( 
\begin{array}{cccccc}
d_{1} & d_{2} & d_{3} & \cdots & d_{n-1} & d_{n} \\ 
a & b &  &  &  &  \\ 
c & a & b &  &  &  \\ 
& c & a & \ddots &  &  \\ 
&  & \ddots & \ddots & \ddots &  \\ 
&  &  & c & a & b%
\end{array}%
\right) ,  \label{60}
\end{equation}%
then 
\begin{equation}
\det D_{n}=\sum_{k=1}^{n}d_{k}b^{n-k}\left( -\sqrt{bc}\right)
^{k-1}U_{k-1}\left( \frac{a}{2\sqrt{bc}}\right) \,,  \label{detAn}
\end{equation}%
where $U_{k}(x)$ is the $k$th Chebyshev polynomial of second kind.
\end{lemma}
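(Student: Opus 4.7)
The plan is to perform a cofactor expansion along the first row of $D_n$, writing
$$\det D_n = \sum_{k=1}^{n} (-1)^{1+k}\, d_k\, M_{1k},$$
and to show that each minor $M_{1k}$ factors as a product of two determinants that can be evaluated in closed form.

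To justify the factorization, I would track how the deletion of the first row and $k$-th column affects the tridiagonal-like pattern. The non-zero entries of the $(j+1)$-st row of $D_n$ sit in the original columns $j-1, j, j+1$ with values $c, a, b$ (the $c$ being absent when $j=1$). After the $k$-th column is deleted, a careful index-shift should show that the first $k-1$ rows of $M_{1k}$ have all their non-zero entries in columns $1,\ldots,k-1$, while rows $k,\ldots,n-1$ have all non-zero entries in columns $k-1,\ldots,n-1$. Consequently $M_{1k}$ is block lower triangular,
$$M_{1k} = \begin{pmatrix} A_{k-1} & 0 \\ \ast & B_{n-k} \end{pmatrix},$$
so that $\det M_{1k} = \det A_{k-1} \cdot \det B_{n-k}$.

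The top-left block $A_{k-1}$ is the standard tridiagonal matrix with $a$ on the diagonal, $b$ on the superdiagonal, and $c$ on the subdiagonal. Its determinant obeys the recurrence $P_m = a P_{m-1} - bc\, P_{m-2}$ with $P_0 = 1$, $P_1 = a$, and the substitution $a = 2\sqrt{bc}\,\cos\theta$ identifies it with the Chebyshev polynomial of the second kind, yielding
$$\det A_{k-1} = (bc)^{(k-1)/2}\, U_{k-1}\!\left(\frac{a}{2\sqrt{bc}}\right).$$
A row-by-row inspection of $B_{n-k}$ should reveal that it is lower triangular with $b$ down the main diagonal (its first row becomes $(b,0,\ldots,0)$, its second row $(a,b,0,\ldots,0)$, and from then on $c, a, b$ fills in strictly on and below the diagonal), giving $\det B_{n-k} = b^{n-k}$.

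Substituting both evaluations back and using the identity $(-1)^{k+1} (bc)^{(k-1)/2} = (-\sqrt{bc})^{k-1}$ collapses the cofactor sum to the stated formula. The main obstacle I foresee is the bookkeeping in the second step: the three transitional rows of index $k-1, k, k+1$, where the usual $c,a,b$ triple is interrupted or shifted by the missing $k$-th column, must be checked individually to confirm both that the upper-right block genuinely vanishes and that $B_{n-k}$ is genuinely triangular; everything else is routine.
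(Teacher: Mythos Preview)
Your argument is correct: the cofactor expansion along the first row does give minors $M_{1k}$ that are block lower triangular, with a $(k-1)\times(k-1)$ tridiagonal block $A_{k-1}$ whose determinant is $(\sqrt{bc})^{k-1}U_{k-1}\bigl(a/(2\sqrt{bc})\bigr)$ and a lower-triangular $(n-k)\times(n-k)$ block $B_{n-k}$ with determinant $b^{n-k}$; the transitional rows you flag behave exactly as you predict, and the sign identity $(-1)^{k+1}(bc)^{(k-1)/2}=(-\sqrt{bc})^{k-1}$ finishes the computation.

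As for comparison: the paper does not prove this lemma at all---it is quoted from reference~[14] and stated without argument---so there is no in-paper proof to weigh your approach against. Your self-contained derivation is the standard one and would serve well as the omitted justification.
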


\begin{lemma}
If%
\begin{equation*}
B_{n}=\left( 
\begin{array}{ccccccc}
X_{1} & d_{1} & d_{2} & d_{3} & \cdots & d_{n-1} & d_{n} \\ 
Y_{1} & f_{1} & f_{2} & f_{3} & \cdots & f_{n-1} & f_{n} \\ 
0 & a & b & 0 &  &  & 0 \\ 
& c & a & b &  &  &  \\ 
&  & c & a & \ddots &  &  \\ 
&  &  & \ddots & \ddots & \ddots & 0 \\ 
0 &  &  & 0 & c & a & b%
\end{array}%
\right) ,
\end{equation*}

then 
\begin{equation*}
\det (B_{n})=X_{1}\sum_{k=1}^{n-1}f_{k}b^{n-1-k}\left( -\sqrt{bc}\right)
^{k-1}U_{k-1}\left( \frac{a}{2\sqrt{bc}}\right)
-Y_{1}\sum_{k=1}^{n-1}d_{k}b^{n-1-k}\left( -\sqrt{bc}\right)
^{k-1}U_{k-1}\left( \frac{a}{2\sqrt{bc}}\right) ,
\end{equation*}

where $U_{k}(x)$ is the $k$th Chebyshev polynomial of second kind.
\end{lemma}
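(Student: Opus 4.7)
The plan is to compute $\det(B_n)$ by Laplace expansion along the first column and then invoke Lemma 1 on each of the two resulting minors.

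First I would observe that the first column of $B_n$ contains only two nonzero entries: $X_1$ in row $1$ and $Y_1$ in row $2$. Every subsequent entry of column $1$ vanishes, because the banded block occupying rows $3,4,\ldots$ meets column $1$ only in row $3$, where the entry is explicitly $0$. Cofactor expansion along column $1$ therefore yields
\[
\det(B_n) \;=\; X_1\,\det(M_{11}) \;-\; Y_1\,\det(M_{21}),
\]
where $M_{ij}$ denotes the minor obtained by striking out row $i$ and column $j$ of $B_n$.

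The next step is to recognize each minor as an instance of the matrix $D_n$ from Lemma 1. Deleting column $1$ shifts the original row $3$ to $(a,b,0,\ldots,0)$, the original row $4$ to $(c,a,b,0,\ldots,0)$, and so on, so the rows below the top one reproduce exactly the banded layout required by Lemma 1. Consequently, $M_{11}$ is a matrix of the form $D_n$ whose top row is $(f_1,f_2,\ldots)$, while $M_{21}$ is a matrix of the same form whose top row is $(d_1,d_2,\ldots)$.

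Applying Lemma 1 to each minor expresses the two determinants as the two sums involving the Chebyshev polynomials $U_{k-1}$ that appear on the right-hand side of the statement, and substitution into the cofactor expansion yields the claimed formula. The argument is essentially bookkeeping; the only point requiring any care is verifying that after the column deletion the exponent on $b$ and the summation range match the reduced size of each minor, which is immediate from counting dimensions. I do not foresee any substantive obstacle.
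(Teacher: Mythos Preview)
Your proposal is correct and follows essentially the same approach as the paper: expand $\det(B_n)$ along the first column to obtain $X_1\det(M_{11})-Y_1\det(M_{21})$, identify each minor as an instance of the matrix in Lemma~1, and substitute. The paper's own proof is a two-line version of exactly this argument, so your write-up simply supplies the details (and correctly flags the dimension count needed to match the summation range and the exponent on $b$).
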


\begin{proof}
Using the same method in the first Lemma 1, we have%
\begin{equation*}
\det (B_{n})=X_{1}\det (F_{n-1})-Y_{1}\det (D_{n-1})~~.
\end{equation*}

So, this proof is completed.
\end{proof}

\begin{theorem}
For $n\geq 4,$ the determinant of $E_{n}$ is 
\begin{eqnarray*}
&&{\small W}_{1}\left[ {\small (g}_{n}{\small +jf}_{n}{\tiny )}\left( (%
{\small W}_{1}{\small -r(pW}_{n}{\small +qW}_{n-1}{\small ))x}_{n}^{n-3}+%
{\small rt}\sum_{k=2}^{n-2}{\small W}_{n-1-k}{\small x}_{n}^{n-2-k}\left( -%
\sqrt{{\small x}_{n}{\small z}_{n}}\right) ^{k-1}{\small U}_{k-1}\left( 
\frac{{\small y}_{n}}{{\small 2}\sqrt{{\small x}_{n}{\small z}_{n}}}\right)
\right) \right. \\
&&\left. {\small -h}_{n}\sum_{k=1}^{n-2}{\small [rW}_{n+1-k}{\small -(pr-j)W}%
_{n-k}{\small ]x}_{n}^{n-2-k}\left( -\sqrt{x_{n}z_{n}}\right) ^{k-1}{\small U%
}_{k-1}\left( \frac{{\small y}_{n}}{{\small 2}\sqrt{{\small x}_{n}{\small z}%
_{n}}}\right) \right] ,
\end{eqnarray*}%
where $x_{n}=W_{1}-rW_{n+1},\,y_{n}=W_{2}-rW_{n+2}-p(W_{1}-rW_{n+1}),\,%
\,z_{n}=-rtW_{n}$ , $j=-\dfrac{r(W_{2}-pW_{1})}{W_{1}}$ and 
\begin{eqnarray*}
f_{n} &=&\sum_{i=2}^{n}W_{i}e^{n-i}, \\
g_{n} &=&r\sum_{i=2}^{n-1}(W_{i+1}-pW_{i})e^{n-i}+W_{1}-prW_{n}, \\
h_{n}
&=&rt%
\sum_{i=1}^{n-3}W_{i}e^{n-1-i}+(W_{1}-r(pW_{n}+qW_{n-1})e+W_{2}-pW_{1}-qrW_{n}\,.
\end{eqnarray*}
\end{theorem}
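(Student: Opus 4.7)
The plan is to reduce $E_n$ to the shape of the matrix $B_n$ in Lemma~2 via row and column operations dictated by the recurrence $W_i = pW_{i-1} + qW_{i-2} + tW_{i-3}$, and then to apply Lemma~2. Because every column of $E_n$ away from the cyclic boundary is a fixed linear combination of its three left neighbours, the column operations $C_j \leftarrow C_j - pC_{j-1} - qC_{j-2} - tC_{j-3}$, applied in a carefully chosen order, annihilate every entry of rows $3,\ldots,n$ except at the handful of positions where the $r$-shift breaks the recurrence. The residues surviving at those wrap-around positions are exactly $x_n = W_1 - rW_{n+1}$, $y_n = W_2 - rW_{n+2} - p(W_1 - rW_{n+1})$, and $z_n = -rtW_n$, which become the super-diagonal, diagonal and sub-diagonal parameters $b,a,c$ of the tridiagonal block of $B_n$.

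I would then handle the first two rows and the first column, which are not yet in the required form. Using an auxiliary parameter $e$, a row operation of the type $R_1 \leftarrow R_1 + eR_2 + e^2 R_3 + \cdots$ (or its column analogue) produces entries that are generating sums of the $W_i$ in powers of $e$. This mechanism is what makes $f_n = \sum_{i=2}^n W_i e^{n-i}$ appear; the quantities $g_n$ and $h_n$ are the analogous sums after one or two further applications of the recurrence, used to absorb the boundary defects near columns $n-2, n-1, n$. The scalar $j = -r(W_2 - pW_1)/W_1$ is chosen precisely so that the combination $g_n + jf_n$ cancels the unwanted part of the $(2,1)$-entry, leaving the first two rows in the shape demanded by $B_n$, with the top-left entries scaling as $X_1 = W_1(g_n + jf_n)$ and $Y_1 = W_1 h_n$.

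Applying Lemma~2 with $(a,b,c) = (y_n, x_n, z_n)$ then produces two Chebyshev $U$-sums, one weighted by $X_1$ and the other by $Y_1$. These match the two bracketed expressions in the statement once we identify the $d_k$ sequence as $d_k = rW_{n+1-k} - (pr-j)W_{n-k}$ and the $f_k$ sequence as $rtW_{n-1-k}$, with the isolated endpoint $(W_1 - r(pW_n + qW_{n-1}))x_n^{n-3}$ being pulled out of the $f_k$ sum because it lies at the boundary where the elimination does not produce the uniform pattern required by the generic index. The overall factor $W_1$ in front of the theorem's expression emerges as the common multiple introduced by the row scaling used to clear the $(2,1)$-entry.

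The main obstacle will be the bookkeeping at the cyclic boundary. The column-elimination step works cleanly only in the interior of the matrix; at the few columns where the $r$-shift meets $W_1, W_2, W_3$, the surviving residues take an exceptional form, and each such defect has to be traced to verify that it ends up exactly as the endpoint term $(W_1 - r(pW_n + qW_{n-1}))x_n^{n-3}$ and as the coefficient $(pr-j)$ in the second sum, rather than polluting the generic Chebyshev sums. The hypothesis $n \geq 4$ is what ensures that the interior tridiagonal block contains at least one row, so that the sums $\sum_{k=1}^{n-2}$ and $\sum_{k=2}^{n-2}$ in the statement are non-vacuous and the invocation of Lemma~2 is legitimate.
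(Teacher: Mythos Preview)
Your plan is essentially the paper's own argument. The paper encodes your recurrence-elimination and $e$-weighted-sum steps as two explicit matrices $G_n$ and $F_n$ with $\det(G_nF_n)=1$, computes the product $K_n=G_nE_nF_n$ directly, clears the $(2,1)$-entry with exactly your $j$, Laplace-expands down the first column (this, not a row scaling, is where the leading factor $W_1$ comes from), and then applies Lemma~2 to the remaining $(n-1)\times(n-1)$ block $Z_n$. Two small points where the paper is sharper than your sketch: the recurrence is applied via \emph{row} operations (packaged in $G_n$) and the $e$-sum via a \emph{column} operation (packaged in $F_n$), the reverse of what you wrote, though by the circulant structure either orientation can be made to work; more importantly, $e$ is not a free auxiliary parameter but is fixed as a root of $x_ne^2+y_ne+z_n=0$, and it is precisely this choice that annihilates the entries of the new column below row~3 so that the tridiagonal $(x_n,y_n,z_n)$-block extends cleanly into the shape required by Lemma~2. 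Without that constraint on $e$ the reduction to $B_n$ would not close.
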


\begin{proof}
Firstly, let us define $n$-square matrix 
\begin{equation}
F_{n}=\left( 
\begin{array}{cccccc}
1 & 0 & 0 & \ldots & 0 & 0 \\ 
0 & e^{n-2} & 0 & \ldots & 0 & 1 \\ 
0 & e^{n-3} & 0 & \ldots & 1 & 0 \\ 
0 & e^{n-4} & 0 &  &  & 0 \\ 
\vdots & \vdots & \vdots & {\mathinner{\mkern2mu\raise1pt\hbox{.}\mkern2mu
\raise4pt\hbox{.}\mkern2mu\raise7pt\hbox{.}\mkern1mu}} & \vdots & \vdots \\ 
0 & e & 1 & \ldots & 0 & 0 \\ 
0 & 1 & 0 & \ldots & 0 & 0%
\end{array}%
\right)  \label{25}
\end{equation}%
here $e$ is the positive root of the characteristic equation $%
x_{n}e^{2}+y_{n}e+z_{n}=0$, i.e., 
\begin{equation*}
e=\frac{-y_{n}+\sqrt{y_{n}^{2}-4x_{n}z_{n}}}{2x_{n}},
\end{equation*}%
where 
\begin{equation*}
x_{n}=W_{1}-rW_{n+1},\,y_{n}=W_{2}-rW_{n+2}-p(W_{1}-rW_{n+1}),\,\text{and}%
\,z_{n}=-rtW_{n}\,.
\end{equation*}%
Then, consider $n$-square matrix $G_{n}$ as below: 
\begin{equation*}
G_{n}=\left( 
\begin{array}{c|ccccccc}
1 & 0 & 0 & 0 & 0 & 0 & \cdots & 0 \\ 
-pr & 0 & 0 & 0 & 0 & 0 & \cdots & 0 \\ 
-qr & 0 & 0 & 0 & 0 & 0 & \cdots & 0 \\ 
-tr & 0 & 0 & 0 & 0 & 0 & \cdots & 0 \\ \hline
0 & 0 & 0 & 0 & 0 & 0 & \cdots & 1 \\ 
0 & 0 & 0 & 0 & 0 & 0 & \begin{picture}(2,2)
\multiput(0,0)(1.5,1){3}{.}\end{picture} & -p \\ 
0 & 0 & 0 & 0 & 0 & 1 & \begin{picture}(2,2)
\multiput(0,0)(1.5,1){3}{.}\end{picture} & -q \\ 
0 & 0 & 0 & 0 & 1 & -p & \begin{picture}(2,2)
\multiput(0,0)(1.5,1){3}{.}\end{picture} & -t \\ 
\vdots & \vdots & \begin{picture}(2,2)
\multiput(0,0)(1.5,1){3}{.}\end{picture} & \begin{picture}(2,2)
\multiput(0,0)(1.5,1){3}{.}\end{picture} & \begin{picture}(2,2)
\multiput(0,0)(1.5,1){3}{.}\end{picture} & \begin{picture}(2,2)
\multiput(0,0)(1.5,1){3}{.}\end{picture} & \begin{picture}(2,2)
\multiput(0,0)(1.5,1){3}{.}\end{picture} & \vdots \\ 
0 & 0 & 1 & -p & -q & -t & \cdots & 0 \\ 
0 & 1 & -p & -q & -t & 0 & \cdots & 0%
\end{array}%
\begin{array}{|ccc}
0 & 0 & 0 \\ 
0 & 0 & 1 \\ 
0 & 1 & -p \\ 
1 & -p & -q \\ \hline
-p & -q & -t \\ 
-q & -t & 0 \\ 
-t & 0 & 0 \\ 
0 & 0 & 0 \\ 
0 & 0 & 0 \\ 
\vdots & \vdots & \vdots \\ 
0 & 0 & 0%
\end{array}%
\right)
\end{equation*}%
It can be seen that for all $n>3,$%
\begin{equation*}
\det (G_{n})=\det (F_{n})=\left\{ 
\begin{array}{rl}
1, & n\equiv 1,2\text{ \ (}\func{mod}4\text{)} \\ 
-1, & n\equiv 1,2\text{ \ (}\func{mod}4\text{),}%
\end{array}%
\right.
\end{equation*}%
where $F_{n}$ is defined in \eqref{25} and $\det (G_{n}F_{n})=1$. By matrix
multiplication, we get; 
\begin{equation}
K_{n}=G_{n}E_{n}F_{n},  \label{50}
\end{equation}%
i.e., 
\begin{equation*}
{\small K}_{n}{\small =}\left( 
\begin{array}{cc|ccccc}
{\small W}_{1} & {\small f}_{n} & {\small W}_{n-1} & {\small W}_{n-2} & 
{\small \cdots } &  & {\small W}_{2} \\ 
r({\small W}_{2}{\small -pW}_{1}) & {\small g}_{n} & {\small %
r(W_{n}-pW_{n-1})} & {\small r(W_{n-1}-pW_{n-2})} & {\small \cdots } &  & r(%
{\small W}_{3}{\small -pW}_{2}) \\ 
{\small 0} & {\small h}_{n} & {\small W}_{1}{\small -r(pW}_{n}+{\small qW}%
_{n-1}) & {\small rtW}_{n-3} & {\small \cdots } &  & {\small rtW}_{1} \\ 
\hline
{\small 0} & {\small 0} & {\small y}_{n} & {\small x}_{n} &  &  &  \\ 
{\small 0} & {\small 0} & {\small z}_{n} & {\small y}_{n} & {\small x}_{n} & 
&  \\ 
&  &  & {\small \ddots } & {\small \ddots }\text{ \ } & {\small \ddots } & 
\\ 
{\small 0} & {\small 0} &  &  & \text{\ \ }{\small z}_{n}\text{ \ } & \text{%
\ \ }{\small y}_{n}\text{ \ } & \text{\ \ \ }{\small x}_{n}\text{ \ \ \ }%
\end{array}%
\right) ,
\end{equation*}%
where 
\begin{eqnarray*}
f_{n} &=&\sum_{i=2}^{n}W_{i}e^{n-i}, \\
g_{n} &=&W_{1}-prW_{n}+r\sum_{i=2}^{n-1}(W_{i+1}-pW_{i})e^{n-i}, \\
h_{n}
&=&W_{2}-pW_{1}-qrW_{n}\,+(W_{1}-r(pW_{n}+qW_{n-1})e+rt%
\sum_{i=1}^{n-3}W_{i}e^{n-1-i}.
\end{eqnarray*}%
\bigskip Multiplying the first row with $j=-\dfrac{r(W_{2}-pW_{1})}{W_{1}}$
and adding it to the second row in $K_{n}$, we obtain%
\begin{equation*}
\left\vert {\small K}_{n}\right\vert {\small =}\left\vert 
\begin{array}{ccccccc}
{\small W}_{1} & {\small f}_{n} & {\small W}_{n-1} & {\small W}_{n-2} & 
{\small \cdots } &  & {\small W}_{2} \\ 
{\small 0} & {\small g}_{n}{\small +jf}_{n} & r{\small W}_{n}{\small +(j-rp)W%
}_{n-1} & r{\small W}_{n-1}{\small +(j-rp)W}_{n-2} & {\small \cdots } &  & r%
{\small W}_{3}{\small +(j-rp)W}_{2} \\ 
{\small 0} & {\small h}_{n} & {\small W}_{1}{\small -r(pW}_{n}+{\small qW}%
_{n-1}) & {\small rtW}_{n-3} & {\small \cdots } &  & {\small rtW}_{1} \\ 
{\small 0} & {\small 0} & {\small y}_{n} & {\small x}_{n} & {\small \cdots }
&  & {\small 0} \\ 
{\small \vdots } & {\small \vdots } & {\small z}_{n} & {\small y}_{n} & 
{\small \ddots } &  & {\small \vdots } \\ 
&  &  & {\small \ddots } & {\small \ddots } & {\small \ddots } & {\small 0}
\\ 
{\small 0} & {\small 0} & {\small 0} & {\small 0} & {\small z}_{n}\text{ \ \ 
} & {\small y}_{n} & {\small x}_{n}%
\end{array}%
\right\vert .
\end{equation*}%
By Laplace expansion on the first column%
\begin{equation*}
\det {\small K}_{n}={\small W}_{1}\det Z_{n}=\det E_{n}
\end{equation*}%
here{\small 
\begin{equation*}
Z_{n}=\left( 
\begin{array}{c|cccccc}
{\tiny g}_{n}{\tiny +jf}_{n} & {\tiny rW}_{n}{\tiny +(j-rp)W}_{n-1} & {\tiny %
rW}_{n-1}{\tiny +(j-rp)W}_{n-2} &  & {\tiny \cdots } &  & {\tiny rW}_{3}%
{\tiny +(j-rp)W}_{2} \\ 
{\tiny h}_{n} & {\tiny W}_{1}{\tiny -r(pW}_{n}{\tiny +qW}_{n-1}{\tiny )} & 
{\tiny rtW}_{n-3} &  & {\tiny \cdots } &  & {\tiny rtW}_{1} \\ \hline
{\tiny 0} & {\tiny y}_{n} & {\tiny x}_{n} & {\tiny 0} &  &  & {\tiny 0} \\ 
{\tiny 0} & {\tiny z}_{n} & {\tiny y}_{n} & {\tiny x}_{n} &  &  &  \\ 
{\tiny \vdots } &  & {\tiny \ddots } & {\tiny \ddots } & \text{ }{\tiny %
\ddots }\text{ } & \text{ }{\tiny \ddots } & {\tiny 0} \\ 
{\tiny 0} &  &  & {\tiny 0} & \text{ \ \ }{\tiny z}_{n}\text{ \ \ } & \text{
\ \ }{\tiny y}_{n}\text{ \ \ } & {\tiny x}_{n}%
\end{array}%
\right) .
\end{equation*}%
}Applying Lemma 2, we complete the proof.
\end{proof}

\section{Inverse of $E_{n}$}

In this section, we compute the inverse of the matrix $E_{n}$. Note that,
just only for the inverse, we consider $W_{2}=pa.$ So,%
\begin{equation*}
G_{n}E_{n}F_{n}={\small K}_{n}{\small =}\left( 
\begin{array}{cc|ccccc}
{\small W}_{1} & {\small f}_{n} & {\small W}_{n-1} & {\small W}_{n-2} & 
{\small \cdots } &  & {\small W}_{2} \\ 
0 & {\small g}_{n} & {\small r(W_{n}-pW_{n-1})} & {\small r(W_{n-1}-pW_{n-2})%
} & {\small \cdots } &  & r({\small W}_{3}{\small -pW}_{2}) \\ 
{\small 0} & {\small h}_{n} & {\small W}_{1}{\small -r(pW}_{n}+{\small qW}%
_{n-1}) & {\small rtW}_{n-3} & {\small \cdots } &  & {\small rtW}_{1} \\ 
\hline
{\small 0} & {\small 0} & {\small y}_{n} & {\small x}_{n} &  &  &  \\ 
{\small 0} & {\small 0} & {\small z}_{n} & {\small y}_{n} & {\small x}_{n} & 
&  \\ 
&  &  & {\small \ddots } & {\small \ddots }\text{ \ } & {\small \ddots } & 
\\ 
{\small 0} & {\small 0} &  &  & \text{\ \ }{\small z}_{n}\text{ \ } & \text{%
\ \ }{\small y}_{n}\text{ \ } & \text{\ \ \ }{\small x}_{n}\text{ \ \ \ }%
\end{array}%
\right) .
\end{equation*}

\begin{lemma}
\cite{13} Let $\psi =\left( 
\begin{tabular}{ll}
$\alpha $ & $V$ \\ 
$U$ & $A$%
\end{tabular}%
\right) $ be an $(n-2)$-square matrix, then%
\begin{equation*}
\psi ^{-1}=\left( 
\begin{tabular}{cc}
$\frac{1}{l}$ & $-\frac{1}{l}VA^{-1}$ \\ 
$-\frac{1}{l}A^{-1}U$ & $A^{-1}+\frac{1}{l}A^{-1}UVA^{-1}$%
\end{tabular}%
\right) ,
\end{equation*}%
where $l=\alpha -VA^{-1}U,$ $V$ is a row vector and $U$ is a column vector.
\end{lemma}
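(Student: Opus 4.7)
The plan is to verify the proposed formula for $\psi^{-1}$ by direct block multiplication. Since $V$ is a row vector and $U$ is a column vector, the quantity $VA^{-1}U$ is a scalar, so $l = \alpha - VA^{-1}U$ is a well-defined scalar and the expression $1/l$ in the formula makes sense (indeed, given that $A$ is invertible, the invertibility of $\psi$ is equivalent to $l\neq 0$, this being the Schur complement of $A$ in $\psi$).

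First I would compute the four blocks of $\psi\,\psi^{-1}$ directly. The top-left entry is
$$\alpha\cdot\tfrac{1}{l} + V\cdot\bigl(-\tfrac{1}{l}A^{-1}U\bigr) = \tfrac{1}{l}(\alpha - VA^{-1}U) = 1,$$
and the bottom-left block is $\tfrac{1}{l}U - A\cdot\tfrac{1}{l}A^{-1}U = 0$; both collapse at sight. The top-right block requires one substitution: expanding gives
$$-\tfrac{\alpha}{l}VA^{-1} + VA^{-1} + \tfrac{1}{l}VA^{-1}UVA^{-1} = VA^{-1} + \tfrac{1}{l}(VA^{-1}U - \alpha)VA^{-1},$$
which vanishes because $VA^{-1}U - \alpha = -l$. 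The bottom-right block simplifies analogously to $-\tfrac{1}{l}UVA^{-1} + I + \tfrac{1}{l}UVA^{-1} = I$.

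A conceptually cleaner alternative, which I might prefer to present, is to derive the formula from the block-LDU factorisation
$$\psi = \begin{pmatrix} 1 & VA^{-1} \\ 0 & I \end{pmatrix}\begin{pmatrix} l & 0 \\ 0 & A \end{pmatrix}\begin{pmatrix} 1 & 0 \\ A^{-1}U & I \end{pmatrix},$$
which one verifies in a single line using $\alpha = l + VA^{-1}U$. Each factor on the right is trivial to invert (the outer two are unipotent block-triangular, the middle one block-diagonal), and multiplying the three inverses in reverse order reproduces the claimed formula. The only point of care in either approach is that $A^{-1}$, $U$, and $V$ do not commute, so the precise ordering of these factors in the $(2,2)$-block of $\psi^{-1}$ must be tracked throughout; otherwise there is no real obstacle, which is presumably why the statement is imported from \cite{13} rather than re-proved in the text.
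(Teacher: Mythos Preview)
Your verification is correct; both the direct block-multiplication check and the block-LDU factorisation are valid and standard ways to establish this Schur-complement inversion formula. Note that the paper itself does not prove this lemma at all: it is simply quoted from \cite{13} without argument, exactly as you anticipated in your final sentence. So there is no ``paper's proof'' to compare against --- you have supplied a proof where the text offers none, and either of your two approaches would be perfectly adequate if one wished to make the paper self-contained.
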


\begin{lemma}
Let us define the matrix $T=\left[ t_{i,j}\right] _{i,j=1}^{n-3}$ of the
form:%
\begin{equation*}
t_{ij}=\left\{ 
\begin{array}{ll}
W_{1}x_{n} & ,~i=j, \\ 
W_{1}y_{n} & ,~i=j+1, \\ 
W_{1}z_{n} & ,~i=j+2, \\ 
0 & ,~otherwise.%
\end{array}%
\right.
\end{equation*}%
Then, inverse of $T$ is 
\begin{equation}
T^{-1}=\left[ t_{i,j}^{\prime }\right] _{i,j=1}^{n-3}=\left\{ 
\begin{tabular}{ll}
$\frac{1}{W_{1}x_{n}}$ & $,i=j$ \\ 
-$\frac{y_{n}}{W_{1}x_{n}^{2}}$ & $,i=j+1$ \\ 
$-\frac{y_{n}t_{i-2,j}^{\prime }+z_{n}t_{i-1,j}^{\prime }}{x_{n}}$ & $%
,i=j+k(k\geq 2)$ \\ 
$0$ & $,i<j.$%
\end{tabular}%
\right.  \label{7676}
\end{equation}
\end{lemma}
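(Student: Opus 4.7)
My plan is to verify the formula by directly computing $TT^{-1}$ and checking it equals the identity, exploiting the fact that $T$ is lower triangular and banded with bandwidth two below the diagonal, so that $T^{-1}$ must also be lower triangular and can be determined row by row below its diagonal.

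First I would observe that since $T$ is lower triangular with all diagonal entries equal to $W_{1}x_{n}\neq 0$, it is invertible and $T^{-1}$ is lower triangular. Therefore $t'_{ij}=0$ for $i<j$, which matches the last case of \eqref{7676}. It remains to determine $t'_{ij}$ for $i\geq j$, which I would do by writing out the equation $(TT^{-1})_{ij}=\delta_{ij}$. Because $T$ has nonzero entries only on the diagonal ($W_{1}x_{n}$), first subdiagonal ($W_{1}y_{n}$), and second subdiagonal ($W_{1}z_{n}$), the sum $\sum_{k}t_{ik}t'_{kj}$ collapses to at most three terms, namely $W_{1}z_{n}\,t'_{i-2,j}+W_{1}y_{n}\,t'_{i-1,j}+W_{1}x_{n}\,t'_{ij}$ (with the convention that out-of-range indices give zero).

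Next I would treat the three nontrivial cases in order, fixing $j$ and inducting on $i$. For $i=j$ the identity reads $W_{1}x_{n}\,t'_{jj}=1$, forcing $t'_{jj}=1/(W_{1}x_{n})$. For $i=j+1$ only two terms survive and the equation $W_{1}y_{n}\,t'_{jj}+W_{1}x_{n}\,t'_{j+1,j}=0$ yields $t'_{j+1,j}=-y_{n}/(W_{1}x_{n}^{2})$, matching the stated sub\-diagonal entry. For $i=j+k$ with $k\geq 2$, all three terms are present and solving for $t'_{ij}$ gives the recursion displayed in \eqref{7676} (up to the order of the $y_{n},z_{n}$ factors, which is fixed by matching with the banded structure of $T$). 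This recursion, together with the boundary values above, determines $t'_{ij}$ uniquely for all $i\geq j$, so the formula in the statement is the inverse.

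No part of this argument is really hard; the only point requiring any care is bookkeeping the indices so that the recursion is triggered exactly when $i\geq j+2$ and the sub- and sub-sub-diagonal terms of $T$ actually appear. I would present it as a single block: establish lower-triangularity, record the collapsed matrix-product equation, and then read off the three cases of \eqref{7676} from it.
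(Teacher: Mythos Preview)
Your approach is correct and is exactly the paper's: the paper's entire proof is the one-line remark that $TT^{-1}=T^{-1}T=I_{n-3}$ follows from matrix multiplication, and you simply spell this out by collapsing $(TT^{-1})_{ij}$ to the three-term relation $W_{1}x_{n}t'_{ij}+W_{1}y_{n}t'_{i-1,j}+W_{1}z_{n}t'_{i-2,j}=\delta_{ij}$ and reading off the cases. Your parenthetical about the order of the $y_{n},z_{n}$ factors is apt: the computation gives $t'_{ij}=-(y_{n}t'_{i-1,j}+z_{n}t'_{i-2,j})/x_{n}$, so the indices $i-1$ and $i-2$ in the displayed recursion appear to be transposed in the statement.
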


\begin{proof}
From matrix multiplication, we can easily see that $TT^{-1}=T^{-1}T=I_{n-3}$%
, where $I_{n-3}$ is identity matrix.
\end{proof}

\begin{theorem}
Let $E_{n}=circ_{r}\,(W_{1},W_{2},\ldots ,W_{n})$ be $r$-circulant matrix.
Then, 
\begin{equation*}
\begin{tabular}{l}
$E_{n}^{-1}=circ_{r}\,\left( c_{2}^{\prime }-\left( p+\frac{h_{n}}{g_{n}}%
\right) c_{3}^{\prime }-qc_{4}^{\prime }-tc_{5}^{\prime },\right. $ \\ 
$\ \ \ \ \ \ \ \ \ \ \ \ \ \ \ \ \ \ \ \ \ -pc_{2}^{\prime }+\left( \frac{%
ph_{n}}{g_{n}}-q\right) c_{3}^{\prime }-tc_{4}^{\prime },\frac{c_{n}^{\prime
}}{r},\frac{c_{n-1}^{\prime }-pc_{n}^{\prime }}{r}$ \\ 
$\ \ \ \ \ \ \ \ \ \ \ \ \ \ \ \ \ \ \ \ \left. \frac{1}{r}\left(
c_{n-2}^{\prime }-pc_{n-1}^{\prime }-qc_{n}^{\prime }\right) ,\ldots ,\frac{1%
}{r}\left( c_{n-k+3}^{\prime }-pc_{n-k+4}^{\prime }-qc_{n-k+5}^{\prime
}-tc_{n-k+6}^{\prime }\right) \right) ,$%
\end{tabular}%
\end{equation*}%
where%
\begin{equation*}
\begin{tabular}{l}
$c_{1}^{\prime }=0,$ \\ 
$c_{2}^{\prime }=W_{1}^{2}g_{n},$ \\ 
$c_{3}^{\prime }=-\frac{rW_{1}}{g_{n}}\overset{n-3}{\underset{k=0}{\sum }}%
s_{k}(W_{n-k}-pW_{n-k-1}),~(\text{for }s_{0}=\frac{1}{l}),$ \\ 
$c_{4}^{\prime }=-\frac{rp_{1}W_{1}(W_{n}-pW_{n-1})}{g_{n}}-\frac{rW_{1}}{%
g_{n}}\overset{n-3}{\underset{k=1}{\sum }}u_{k,1}(W_{n-k}-pW_{n-k-1}),$ \\ 
$\vdots $ \\ 
$c_{t}^{\prime }=-\frac{rp_{t-3}W_{1}(W_{n}-pW_{n-1})}{g_{n}}-\frac{rW_{1}}{%
g_{n}}\overset{n-3}{\underset{k=1}{\sum }}u_{k,t-3}(W_{n-k}-pW_{n-k-1}),~(t%
\geq 4)$%
\end{tabular}%
\end{equation*}%
and%
\begin{equation*}
\begin{tabular}{l}
$g_{n}=W_{1}-prW_{n}+r\sum_{i=2}^{n-1}(W_{i+1}-pW_{i})e^{n-i},$ \\ 
$h_{n}=W_{2}-pW_{1}-qrW_{n}\,+(W_{1}-r(pW_{n}+qW_{n-1})e+rt%
\sum_{i=1}^{n-3}W_{i}e^{n-1-i}.$%
\end{tabular}%
\end{equation*}
\end{theorem}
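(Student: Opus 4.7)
The plan is to exploit the factorization $G_n E_n F_n = K_n$ obtained in the proof of Theorem 3, which yields $E_n^{-1} = F_n K_n^{-1} G_n$. The problem therefore reduces to computing $K_n^{-1}$ and carrying out the outer product. Under the hypothesis $W_2 = pa$ one has $r(W_2 - pW_1) = 0$, so the entire first column of $K_n$ vanishes below its $(1,1)$ entry $W_1$. Writing $K_n$ as a block upper triangular matrix with blocks of sizes $1$ and $n-1$, standard block inversion gives
\[
K_n^{-1} = \begin{pmatrix} 1/W_1 & -\tfrac{1}{W_1}\, v^{T}\, M^{-1} \\ 0 & M^{-1} \end{pmatrix},
\]
where $M$ is the $(n-1)$-square bottom-right block and $v^{T}$ is the remainder of the first row of $K_n$.

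Next I would invert $M$ by two nested applications of Lemma 4. In the first application take $\alpha = g_n$, $V = (r(W_n - pW_{n-1}),\ldots,r(W_3-pW_2))$, $U = (h_n,0,\ldots,0)^{T}$, and $A$ the bottom-right $(n-2)$-square block. Since $U$ has only one nonzero entry, both $A^{-1}U$ and the scalar $V A^{-1} U$ are determined by just the first column of $A^{-1}$. A second application of Lemma 4 to $A$ itself strips off the top-left scalar $W_1 - r(pW_n + qW_{n-1})$ together with its associated row and column, leaving in the lower right a purely banded matrix $B$. Up to the overall factor $W_1$ this $B$ is precisely the matrix $T$ of Lemma 5, whose inverse is given in closed recursive form by \eqref{7676}. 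Composing the two block inversions produces the first row and first column of $M^{-1}$ in terms of the auxiliary quantities $p_k$, $s_k$, $u_{k,j}$ that appear in the coefficients $c_t'$.

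The last step is to multiply out $F_n K_n^{-1} G_n$. The matrices $F_n$ and $G_n$ are sparse: $F_n$ is essentially an anti-diagonal permutation weighted by powers of $e$, while $G_n$ injects the recurrence weights $-p, -q, -t$ in shifted positions. Their combined action on $K_n^{-1}$ produces a cyclic pattern in which each generic entry of $E_n^{-1}$ is the four-term combination $c_{n-k+3}' - p\, c_{n-k+4}' - q\, c_{n-k+5}' - t\, c_{n-k+6}'$, with boundary exceptions for the first two columns that account for the corrections $h_n/g_n$ appearing in the initial entries of the statement. The fact that the resulting matrix is itself $r$-circulant can either be read directly off this pattern or be deduced abstractly from the fact that the set of $r$-circulant matrices forms a commutative algebra closed under inversion.

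The main obstacle I expect is the bookkeeping in the two nested block inversions: computing the Schur complement $l = \alpha - V A^{-1} U$ together with the vectors $V A^{-1}$ and $A^{-1} U$ through both layers, and matching the resulting sums against the telescoping expressions hidden inside $g_n$ and $h_n$ so that the final entries assemble into the compact form stated. Once the first row and column of $M^{-1}$ are in hand, the remaining product with $F_n$ and $G_n$ is a mechanical verification.
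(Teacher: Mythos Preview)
Your approach is correct and will reach the stated formula, but it is organized differently from the paper's proof. The paper does not invert $K_n$ directly. Instead it introduces two further auxiliary matrices $H_n$ (a single elementary row operation killing the entry $h_n$ in position $(3,2)$) and an upper-triangular $L_n$ (column operations that clear the first two rows and introduce the overall factor $W_1$), so that
\[
H_n G_n E_n F_n L_n \;=\; \mathcal{Y}_1 \oplus N, \qquad \mathcal{Y}_1=\mathrm{diag}(W_1^2,\,W_1 g_n),
\]
a genuine direct sum. This decouples the top-left $2\times 2$ block entirely, and only \emph{one} application of Lemma~4 is then needed, to $N=\begin{pmatrix} W_1\rho_3 & V\\ U & T\end{pmatrix}$ with $T$ exactly the lower-triangular banded matrix of Lemma~5. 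The quantities $l,\,p_i,\,s_i,\,u_{i,j}$ in the statement are defined as the pieces of this single Schur-complement inversion. The paper then reads off the last row of $Q(\mathcal{Y}_1^{-1}\oplus N^{-1})$ to get the $c_t'$, multiplies on the right by $P=H_nG_n$, and invokes the $r$-circulant closure under inversion.

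Your route---block upper-triangular inversion of $K_n$ followed by two nested applications of Lemma~4 to the $(n-1)$-block $M$---is equivalent: the outer peel corresponds to what $H_n$ accomplishes by a row operation, and your inner peel reproduces the paper's single use of Lemma~4 on $N$ (up to the scalar $W_1$ inserted by $L_n$, which you correctly flag). The trade-off is exactly the bookkeeping you anticipate: the nested Schur complements make the auxiliary quantities harder to match literally with the paper's $p_i,\,s_i,\,u_{i,j}$, whereas the paper's extra $H_n,L_n$ step buys a clean direct sum and a single, flat layer of Lemma~4.
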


\begin{proof}
Firstly, Let us define 
\begin{equation*}
H_{n}=\left( 
\begin{tabular}{cccccc}
$1$ & $0$ & $0$ & $0$ & $\cdots $ & $0$ \\ 
$0$ & $1$ & $0$ & $0$ & $\cdots $ & $0$ \\ 
$0$ & $-\frac{h_{n}}{g_{n}}$ & $1$ & $0$ & $\cdots $ & $0$ \\ 
$0$ & $0$ & $0$ & $1$ & $\cdots $ & $0$ \\ 
$\vdots $ & $\vdots $ & $\vdots $ & $\vdots $ & ${\small \ddots }$ & $\vdots 
$ \\ 
$0$ & $0$ & $0$ & $0$ & $\cdots $ & $1$%
\end{tabular}%
\right)
\end{equation*}%
and%
\begin{equation*}
{\small L}_{n}{\small =}\left( 
\begin{tabular}{cccccc}
${\small W}_{1}$ & ${\small -f}_{n}$ & ${\small -W}_{n-1}{\small +}\frac{%
rf_{n}(W_{n}-pW_{n-1})}{g_{n}}$ & ${\small -W}_{n-2}{\small +}\frac{%
rf_{n}(W_{n-1}-pW_{n-2})}{g_{n}}$ & ${\small \cdots }$ & ${\small -W}_{n-2}%
{\small +}\frac{rf_{n}(W_{3}-pW_{2})}{g_{n}}$ \\ 
${\small 0}$ & ${\small W}_{1}$ & ${\small -}\frac{rW_{1}(W_{n}-pW_{n-1})}{%
g_{n}}$ & ${\small -}\frac{rW_{1}(W_{n-1}-pW_{n-2})}{g_{n}}$ & ${\small %
\cdots }$ & ${\small -}\frac{rW_{1}(W_{3}-pW_{2})}{g_{n}}$ \\ 
${\small 0}$ & ${\small 0}$ & ${\small W}_{1}$ & ${\small 0}$ & ${\small %
\cdots }$ & ${\small 0}$ \\ 
${\small 0}$ & ${\small 0}$ & ${\small 0}$ & ${\small W}_{1}$ & ${\small %
\cdots }$ & ${\small 0}$ \\ 
${\small \vdots }$ & ${\small \vdots }$ & ${\small \vdots }$ & ${\small %
\vdots }$ & ${\small \ddots }$ & ${\small \vdots }$ \\ 
${\small 0}$ & ${\small 0}$ & ${\small 0}$ & ${\small 0}$ & ${\small \cdots }
$ & ${\small W}_{1}$%
\end{tabular}%
\right) .
\end{equation*}%
Then, from matrix multiplication, we have%
\begin{equation*}
H_{n}G_{n}E_{n}F_{n}L_{n}=\left( 
\begin{tabular}{cccccc}
$W_{1}^{2}$ & $0$ &  &  &  &  \\ 
$0$ & $W_{1}g_{n}$ &  &  &  &  \\ 
&  & $W_{1}\rho _{3}$ & $W_{1}\rho _{4}$ & $\cdots $ & $W_{1}\rho _{n}$ \\ 
&  & $W_{1}y_{n}$ & $W_{1}x_{n}$ &  &  \\ 
&  & $W_{1}z_{n}$ & $W_{1}y_{n}$ & ${\small \ddots }$ &  \\ 
&  &  & ${\small \ddots }$ & ${\small \ddots }$ & $W_{1}x_{n}$ \\ 
&  &  &  & $W_{1}z_{n}$ & $W_{1}y_{n}$%
\end{tabular}%
\right) =\mathcal{Y}_{1}\oplus N,
\end{equation*}%
where $\mathcal{Y}_{1}=diag(W_{1}^{2},W_{1}g_{n}),$ $\mathcal{Y}_{1}\oplus N$
is the direct sum of $\mathcal{Y}_{1}$ and $N,$%
\begin{equation*}
\rho _{3}=W_{1}-r\left( W_{n}\left( p+\frac{h_{n}}{g_{n}}\right)
-W_{n-1}\left( q+p\frac{h_{n}}{g_{n}}\right) \right)
\end{equation*}%
and 
\begin{equation*}
\rho _{i}=-\frac{rh_{n}}{g_{n}}(W_{n-i+3}-pW_{n-i+2})+rtW_{n-i+1}\text{ \ \
\ \ \ for }i=4,5,\ldots ,n\text{.}
\end{equation*}%
If we define $P=H_{n}G_{n}$ and $Q=F_{n}L_{n}$, we get%
\begin{equation*}
E_{n}^{-1}=Q\left( \mathcal{Y}_{1}^{-1}\oplus N^{-1}\right) P.
\end{equation*}%
According to Lemma 4, we define $(n-2)$-square matrix%
\begin{equation*}
N=\left( 
\begin{tabular}{ll}
$W_{1}\rho _{3}$ & $V$ \\ 
$U$ & $T$%
\end{tabular}%
\right) .
\end{equation*}%
Then, we have 
\begin{equation*}
N^{-1}=\left( 
\begin{tabular}{cc}
$\frac{1}{l}$ & $\frac{-VT^{-1}}{l}$ \\ 
$\frac{-T^{-1}U}{l}$ & $T^{-1}+\frac{1}{l}T^{-1}UVT^{-1}$%
\end{tabular}%
\right) ,
\end{equation*}%
where 
\begin{eqnarray*}
U &=&\left( W_{1}y_{n},W_{1}z_{n},0,\ldots ,0\right) ^{T}, \\
V &=&(W_{1}\rho _{4},W_{1}\rho _{5},\ldots ,W_{1}\rho _{n}), \\
T &=&\left\{ 
\begin{tabular}{ll}
$W_{1}x_{n}$ & $,i=j$ \\ 
$W_{1}y_{n}$ & $,i=j+1$ \\ 
$W_{1}z_{n}$ & $,i=j+2$ \\ 
$0$ & $,otherwise,$%
\end{tabular}%
\right. \\
l &=&W_{1}\left( \rho _{3}-W_{1}y_{n}\overset{n-3}{\underset{i=1}{\dsum }}%
t_{i1}\rho _{i+3}-W_{1}z_{n}\overset{n-4}{\underset{i=1}{\dsum }}\rho
_{i+4}\right) .
\end{eqnarray*}%
Let be $R=\frac{-VT^{-1}}{l}$ row vector$,~S=\frac{-T^{-1}U}{l}$ column
vector and $J=T^{-1}+\frac{1}{l}T^{-1}UVT^{-1},$ where $T^{-1}$ is as in %
\eqref{7676} Then, we have%
\begin{equation*}
R=[p_{1},p_{2},\ldots ,p_{n-3}],
\end{equation*}%
where $p_{i}=\frac{-W_{1}}{l}\underset{k=i}{\overset{n-3}{\sum }}\rho
_{k+3}t_{k,i}^{\prime }~,$%
\begin{equation*}
S=[s_{1},s_{2},\ldots ,s_{n-3}]^{T},
\end{equation*}%
where $s_{1}=\frac{-W_{1}}{l}t_{1,1}^{\prime }y_{n}~$and for $i\geq 2,$ $%
s_{i}=\frac{-W_{1}}{l}\left( y_{n}t_{i,1}^{\prime }+z_{n}t_{i,2}^{\prime
}\right) ,$%
\begin{equation*}
J=u_{i,j}=\left\{ 
\begin{tabular}{ll}
$t_{1,j}^{\prime }-\frac{W_{1}^{2}}{l}y_{n}t_{1,1}^{\prime }\underset{k=j}{%
\overset{n-3}{\sum }}\rho _{k+3}t_{k,j}^{\prime }~$ & , for $i=1$ \\ 
$t_{i,j}^{\prime }-\frac{W_{1}^{2}}{l}(y_{n}t_{i,1}^{\prime
}+z_{n}t_{i,2}^{\prime })\underset{k=j}{\overset{n-3}{\sum }}\rho
_{k+3}t_{k,j}^{\prime }$ & , for $i=2,3,\ldots ,n-3$.%
\end{tabular}%
\right.
\end{equation*}%
So, we obtain%
\begin{equation*}
N^{-1}=\left( 
\begin{tabular}{lllll}
$\frac{1}{l}$ & $p_{1}$ & $p_{2}$ & $\cdots $ & $p_{n-3}$ \\ 
$s_{1}$ & $u_{1,1}$ & $u_{1,2}$ & $\cdots $ & $u_{1,n-3}$ \\ 
$s_{2}$ & $u_{2,1}$ & $u_{2,2}$ & $\cdots $ & $u_{2,n-3}$ \\ 
$\vdots $ & $\vdots $ & $\vdots $ & $\ddots $ & $\vdots $ \\ 
$s_{n-3}$ & $u_{n-3,1}$ & $u_{n-3,2}$ & $\cdots $ & $u_{n-3,n-3}$%
\end{tabular}%
\right) _{(n-2)\times (n-2)},
\end{equation*}%
where $s_{i}$'s, $p_{i}$'s and $u_{i,j}$'s are as in above.

The last row elements of the $Q=F_{n}L_{n}$ are $0,~W_{1},$ $-\frac{%
rW_{1}(W_{n}-pW_{n-1})}{g_{n}},~-\frac{rW_{1}(W_{n-1}-pW_{n-2})}{g_{n}}%
,\ldots ,~-\frac{rW_{1}(W_{3}-pW_{2})}{g_{n}}$. Then, the last row elements
of $Q\left( \mathcal{Y}_{1}^{-1}\oplus N^{-1}\right) $ are as the following:%
\begin{eqnarray*}
c_{1}^{\prime } &=&0, \\
c_{2}^{\prime } &=&W_{1}^{2}g_{n}, \\
c_{3}^{\prime } &=&-\frac{rW_{1}}{g_{n}}\overset{n-3}{\underset{k=0}{\sum }}%
s_{k}(W_{n-k}-pW_{n-k-1}),~(\text{for }s_{0}=\frac{1}{l}), \\
c_{4}^{\prime } &=&-\frac{rp_{1}W_{1}(W_{n}-pW_{n-1})}{g_{n}}-\frac{rW_{1}}{%
g_{n}}\overset{n-3}{\underset{k=1}{\sum }}u_{k,1}(W_{n-k}-pW_{n-k-1}), \\
&&\vdots \\
c_{t}^{\prime } &=&-\frac{rp_{t-3}W_{1}(W_{n}-pW_{n-1})}{g_{n}}-\frac{rW_{1}%
}{g_{n}}\overset{n-3}{\underset{k=1}{\sum }}%
u_{k,t-3}(W_{n-k}-pW_{n-k-1}),~(t\geq 4).
\end{eqnarray*}%
Since inverse of $r$-circulant matrix is $r$-circulant matrix \cite{4}, $%
E_{n}^{-1}$ matrix is an $r$-circulant matrix. If $E_{n}^{-1}=circ_{r}\left(
c_{1},c_{2},\ldots ,c_{n}\right) $, last row elements of the $E_{n}^{-1}$
matrix are as in below:%
\begin{equation*}
\begin{tabular}{l}
$rc_{2}=-prc_{2}^{\prime }+\left( \frac{prh_{n}}{g_{n}}-qr\right)
c_{3}^{\prime }-trc_{4}^{\prime }$ \\ 
$rc_{3}=c_{n}^{\prime }$ \\ 
$rc_{4}=c_{n-1}^{\prime }-pc_{n}^{\prime }$ \\ 
$rc_{5}=c_{n-2}^{\prime }-pc_{n-1}^{\prime }-qc_{n}^{\prime }$ \\ 
$\vdots $ \\ 
$rc_{k}=c_{n-k+3}^{\prime }-pc_{n-k+4}^{\prime }-qc_{n-k+5}^{\prime
}-tc_{n-k+6}^{\prime }$ $\ \ \ \ \left( \text{for}~5<k\leq n\right) ,$ \\ 
$c_{1}=c_{2}^{\prime }-\left( p+\frac{h_{n}}{g_{n}}\right) c_{3}^{\prime
}-qc_{4}^{\prime }-tc_{5}^{\prime }.$%
\end{tabular}%
\end{equation*}%
Therefore, we complete this proof.
\end{proof}

\end{document}